\newtheorem{theorem}{Theorem}
\newtheorem{lemma}{Lemma}
\begin{document}

\title{Proper Orientation Number of Triangle-free Bridgeless Outerplanar Graphs }

\author{J. Ai$^{1}$, S. Gerke$^{2}$, G. Gutin$^{1}$,  Y. Shi$^{3}$ and Z. Taoqiu$^{3}$\qquad\\
\small $^{1}$ Department of Computer Science\\ \small Royal Holloway, University of London\\ \small Egham, Surrey, TW20 0EX, UK\\ \small Jiangdong.Ai.2018@live.rhul.ac.uk, g.gutin@rhul.ac.uk\\
\small $^{2}$ Department of Mathematics\\ \small Royal Holloway, University of London\\ \small Egham, Surrey, TW20 0EX, UK\\ \small Stefanie.Gerke@rhul.ac.uk\\
\small $^{3}$ Center for Combinatorics and LPMC\\ \small Nankai University, Tianjin, P.R. China 300071\\ \small shi@nankai.edu.cn, tochy@mail.nankai.edu.cn}
\maketitle

\begin{abstract}
An orientation of $G$ is a digraph obtained from $G$ by replacing each edge by exactly one of two possible arcs with the same endpoints. We call an orientation \emph{proper} if neighbouring vertices have different in-degrees. The proper orientation number of a graph $G$, denoted by $\vec{\chi}(G)$, is the minimum maximum in-degree of a proper orientation of G. Araujo et al. (Theor. Comput. Sci. 639 (2016) 14--25) asked whether there is a constant $c$ such that $\vec{\chi}(G)\leq c$ for every outerplanar graph $G$ and showed that $\vec{\chi}(G)\leq 7$ for every cactus $G.$ We prove that $\vec{\chi}(G)\leq 3$ if $G$ is a triangle-free $2$-connected outerplanar graph and $\vec{\chi}(G)\leq 4$ if $G$ is a triangle-free bridgeless outerplanar graph. 
\vspace{0.3cm}\\
{\bf Keywords:} proper orientation; proper orientation number; outerplanar graph.

\end{abstract}

\section{Introduction}


In this paper we consider graphs with no loops or multiple edges. An embedding of a planar graph into the plane is {\em outerplanar} if all vertices of $G$ belong to some face of $G'.$
A planar graph $G$ is {\em outerplanar} if it has an outerplanar embedding.
Outerplanar graphs are not only of interest in graph theory, they are also useful in applications, see, e.g., Gutin et al. \cite{GMS}, where it is proved that  the so-called vertex horizontal graphs used in the analysis of time series  are precisely outerplanar graphs with a Hamilton path.

An orientation  of a graph $G=(V,E)$ is a digraph $D=(V,A)$ obtained from $G$ by replacing each edge by exactly one of two possible arcs with the same end-vertices. 
For each $v \in V$, the {\em in-degree} of $v$ in $D=(V,A)$, denoted by $d^-_D(v)$, is the number of vertices $u$ such that $(u,v)\in A.$ 
An orientation $D$ of $G$ is proper if $d^-_D(v)\ne d^-_D(u)$, for all $uv\in E(G)$. An orientation with maximum in-degree at most $k$ is called a {\em proper $k$-orientation}. The {\em proper orientation number} of a graph $G$, denoted by $\vec{\chi}(G)$, is the minimum integer $k$ such that $G$ admits a proper $k$-orientation.

This graph parameter was introduced by Ahadi and Dehghan \cite{AD}. They observed that this parameter is well-defined for any graph $G$ since one can always obtain a proper $\Delta(G)$-orientation, where $\Delta(G)$ is the maximum degree of $G$. This fact can be proved by induction on the size of $G$ by removing a vertex of maximum degree and orienting all edges towards  this vertex. Every proper orientation of a graph $G$ induces a proper vertex-coloring of $G$. Thus,
$\omega(G)-1 \leq \chi(G)-1 \leq \vec{\chi}(G) \leq \Delta(G),$
where $\omega(G)$ is the number of vertices in a maximum clique of $G$. 
Ahadi and Dehghan \cite{AD} proved that it is NP-complete to compute $\vec{\chi}(G)$ even for planar graphs.  Araujo et al. \cite{AN} strengthened this result by showing it holds for bipartite planar graphs of maximum degree $5$. 

A parameter $\alpha$ is \emph{monotonic} if $\alpha(H)\leq \alpha(G)$ for every induced subgraph $H$ of $G$. Unfortunately, the proper orientation number is not monotonic; an example of a tree $T$ and its leaf $x$ such as $\vec{\chi}(T)=2$ but $\vec{\chi}(T-x)=3$ is given in  \cite{AJ}. This makes it difficult to prove upper bounds on the parameter even for relatively narrow classes of graphs. Araujo et al. \cite{AJ} asked whether there is a constant $c$ such that $\vec{\chi}(G)\leq c$ for every outerplanar graph $G$. They proved that for any cactus $G$ (where every 2-connected component is either an edge or a cycle), we have $\vec{\chi}(G)\leq 7$ and  that for any tree $T,$ $\vec{\chi}(T)\leq 4$ (see also \cite{KM} for a short algorithmic proof). 

We prove the following results.

\begin{theorem}\label{main2}
For any triangle-free, $2$-connected, outerplanar graph $G$, we have $\vec{\chi}(G)\leq 3$. The bound is tight.
\end{theorem}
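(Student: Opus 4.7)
My plan is to induct on the number of chords of $G$. Since $G$ is $2$-connected and outerplanar, it has a unique outerplanar embedding whose outer face is a Hamilton cycle $C$, every other edge is a non-crossing chord of $C$, and the weak dual (the plane dual minus the outer face) is a tree. In the base case $G$ has no chord, so $G = C_n$, and triangle-freeness forces $n \geq 4$; here a short direct construction gives a proper orientation with maximum in-degree at most $2$ (for even $n$, orient alternate pairs of cycle edges to obtain the in-degree sequence $0, 2, 0, 2, \ldots$; for odd $n$, modify by inserting a single $0, 1, 2$ segment near one vertex).

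For the inductive step, I would select an internal face $F$ corresponding to a leaf of the weak dual. Its boundary consists of one chord $e = uw$ together with a path $P : u = v_0, v_1, \ldots, v_k = w$ on $C$, where $v_1, \ldots, v_{k-1}$ are degree-$2$ vertices of $G$ and $k \geq 3$ by triangle-freeness. Deleting $v_1, \ldots, v_{k-1}$ yields $G' = G - \{v_1, \ldots, v_{k-1}\}$, a $2$-connected triangle-free outerplanar graph with one fewer chord (the old chord $uw$ becomes an outer-cycle edge of $G'$), so by the induction hypothesis $G'$ has a proper $3$-orientation $D'$; set $a = d^-_{D'}(u)$ and $b = d^-_{D'}(w)$, so $a \neq b$. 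I then extend $D'$ to $G$ by orienting the $k$ edges of $P$. Letting $x_i \in \{0,1\}$ encode the orientation of $v_i v_{i+1}$ (with $x_i = 1$ meaning $v_i \to v_{i+1}$), one has $d^-_D(v_j) = x_{j-1} + (1 - x_j) \in \{0, 1, 2\}$ for $1 \leq j \leq k-1$, while $d^-_D(u) = a + (1 - x_0)$ and $d^-_D(w) = b + x_{k-1}$. The natural ``outward-outward'' choice $x_0 = 1$ and $x_{k-1} = 0$ keeps $a$ and $b$ fixed (so properness at all $G'$-neighbors of $u, w$ is preserved automatically), reducing the problem to choosing $x_1, \ldots, x_{k-2}$ so that the in-degree sequence $a, d^-_D(v_1), \ldots, d^-_D(v_{k-1}), b$ has consecutive entries all distinct.

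A direct case analysis shows the outward-outward strategy always succeeds for $k = 3$, but fails for a few exceptional pairs $(a, b)$ when $k = 4$ (for instance $(1,2)$ and $(2,1)$); in such cases one must flip one pendant orientation, ``bumping'' either $a$ to $a+1$ or $b$ to $b+1$. The main obstacle of the proof is precisely this: the bumped value must not coincide with $d^-_{D'}(y)$ for any $G'$-neighbor $y$ of the bumped endpoint. I expect the proof handles this by strengthening the induction hypothesis---asserting that $G'$ admits a proper $3$-orientation realizing any target in-degree pair at $uw$ drawn from a sufficiently rich feasible set---so that the extension step has enough flexibility in every case. Tightness of the bound would be established by exhibiting a small triangle-free $2$-connected outerplanar graph (for example, a specific gluing of $C_4$'s along shared edges) and verifying directly that no proper $2$-orientation exists for it.
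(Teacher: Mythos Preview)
Your ear decomposition via leaves of the weak dual is exactly the structural setup the paper uses, and you have put your finger on the real difficulty: attaching a path of length $4$ to an edge whose endpoints have in-degrees $1$ and $2$ cannot be done while keeping both endpoints fixed, and bumping one of them may collide with an existing neighbour's in-degree. But your proposed resolution---``strengthen the induction hypothesis so that $G'$ realises any pair $(a,b)$ from a sufficiently rich feasible set at $uw$''---is not carried out, and this is the genuine gap. The obstruction is not local to the single edge $uw$: once the in-degree-$2$ endpoint is already adjacent to a vertex of in-degree $3$ (the paper calls it a \emph{trouble maker}), a chain of length-$3$ ears followed by a length-$4$ ear (a \emph{$k$-fan}) forces a cascade of conflicts that no per-edge feasible set absorbs. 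Formulating an inductive invariant strong enough to survive these cascades is precisely the hard part you have not done.

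The paper does \emph{not} strengthen the hypothesis. Instead it runs the decomposition forward (build $G$ by attaching ears) and equips the orientation step with \emph{look-ahead}: before orienting an ear it inspects whether a $k$-fan will later be attached to one of the new edges, and if so it chooses a different orientation for the current ear (and, in the $k$-fan case, for the whole fan at once) so that the eventual trouble maker ends up with in-degree $2$ next to in-degree $3$ only on edges that are already inactive. Two auxiliary procedures iterate this look-ahead until no active $1$--$2$ edge with a trouble maker remains. This anticipatory re-orientation is the missing idea in your proposal; a purely backward induction without it appears not to close.

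For tightness, the paper's witness is a $C_5$ with a path of length $4$ glued to each of its five edges, together with a short parity argument that any proper orientation must create a vertex of in-degree $3$; your suggested gluings of $C_4$'s are not obviously sufficient.
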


Note that the intersection between cacti and $2$-connected outerplanar graph consists only of cycles. The proof of Theorem~\ref{main2} is non-trivial and requires a good understanding of difficult cases in bounding the parameter for a triangle-free $2$-connected outerplanar graph $G$ when, using an outerplanar embedding of $G$, $G$ is constructed from a cycle by adding ears one by one. In essence, the proof consists of an appropriate algorithm and its correctness proof. 
The algorithm can be run in linear time.

By increasing the bound by one, we can widen the class of outerplanar graphs as follows. The proofs are based on Theorem \ref{main2}. A {\em bridge} is an edge whose deletion increases the number of connected components of $G.$ A graph is {\em bridgeless} if it has no bridges. We call a graph \emph{tree-free} if after removing all bridges the 2-connected components have size at least $3.$

\begin{theorem}\label{bridgefree}
For any triangle-free, bridgeless, outerplanar graph $G$, we have $\vec{\chi}(G)\leq 4$. 
\end{theorem}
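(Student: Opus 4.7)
The plan is to reduce to Theorem~\ref{main2} via the block-cut tree of $G$. Since $G$ is bridgeless, every block of $G$ has at least three vertices and is itself a triangle-free, $2$-connected, outerplanar graph, so Theorem~\ref{main2} applies to each block. Root the block-cut tree at an arbitrary block $B_0$; for each non-root block $B$, let $v_B$ denote its \emph{parent cut vertex} (the unique vertex of $B$ on the path from $B$ to $B_0$ in the block-cut tree), and for each cut vertex $u$ of $G$ write $P(u)$ for its parent block. The global orientation of $G$ will be obtained by orienting each block separately so that at every cut vertex $u$ only the parent block $P(u)$ contributes to $d^-_G(u)$; this keeps the in-degree of each cut vertex bounded by $4$ while still leaving room to repair properness across blocks.

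The central step is the following strengthening of Theorem~\ref{main2}: for every triangle-free $2$-connected outerplanar graph $B$, every vertex $v\in B$, and every forbidden value $t\in\{0,1,2,3,4\}$, there exists a proper orientation of $B$ with maximum in-degree at most $4$ such that $d^-_B(v)=0$ and $d^-_B(w)\neq t$ for every neighbor $w$ of $v$ in $B$. The natural route is to revisit the constructive argument underlying Theorem~\ref{main2}, in which $B$ is built from a starting cycle by successive ear additions: choose a starting cycle containing $v$, enforce the invariant that $v$ is a source throughout, and spend the extra unit of in-degree (over the bound of $3$ in Theorem~\ref{main2}) on reversing a single edge at any neighbor $w$ of $v$ whose default in-degree happens to equal the forbidden value $t$. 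Triangle-freeness together with $2$-connectedness guarantees the existence of a non-$v$ neighbor of such a $w$, so the reversal does not touch the edge $vw$ and does not ripple past one additional vertex.

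With the strengthened lemma available, the orientation of $G$ is assembled by a top-down pass on the block-cut tree. Orient $B_0$ using Theorem~\ref{main2}. For each non-root block $B$ processed in breadth-first order, set $t:=d^-_{P(v_B)}(v_B)\in\{0,\dots,4\}$, a value already determined by the orientation of the parent block, and apply the strengthened lemma to $B$ with $v=v_B$ and forbidden value $t$. The resulting orientation of $G$ satisfies $d^-_G(u)=d^-_{P(u)}(u)\le 4$ for every cut vertex $u$, because every non-parent block containing $u$ contributes $0$ to its in-degree. Properness is checked case by case on an edge $uw$: if the block $B$ containing $uw$ is the parent block of both endpoints, properness is inherited from the proper orientation of $B$; otherwise one endpoint (say $u$) equals $v_B$ and $d^-_G(u)=t$, so the forbidden-value clause yields $d^-_G(w)=d^-_B(w)\neq t=d^-_G(u)$; the case where both endpoints are parent cut vertices of $B$ is impossible because $v_B$ is unique. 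I expect the main obstacle to be the proof of the strengthened lemma, specifically isolating the modest modification of the ear-decomposition argument of Theorem~\ref{main2} that simultaneously enforces $d^-_B(v)=0$, avoids one forbidden in-degree at every neighbor of $v$, and keeps the maximum in-degree within $4$.
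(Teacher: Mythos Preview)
Your block--cut-tree reduction is exactly the framework the paper uses, but the heart of your argument---the ``strengthened lemma'' that for any designated vertex $v$ and any forbidden value $t$ one can properly $4$-orient the block with $v$ a source and every neighbour of $v$ avoiding $t$---is left unproven, and the sketch you offer does not close. Reversing one non-$vw$ edge at a bad neighbour $w$ changes the in-degree of the far endpoint $z$ by $\pm 1$; you assert this ``does not ripple past one additional vertex'', but that is exactly where properness can fail, since $z$ may now collide with one of \emph{its} neighbours. You also need to handle several neighbours of $v$ simultaneously hitting the forbidden value, and nothing in the sketch prevents the corresponding reversals from interfering (two bad neighbours sharing a common third vertex, or a reversal pushing some in-degree to $5$). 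Finally, keeping $v$ a source throughout the ear-by-ear construction of Theorem~\ref{main2} is plausible but not automatic: several branches of that algorithm (the $k$-fan reorientations inside \textbf{Procedure-1}, the special path in (A2)) do alter existing in-degrees, and you would have to verify that a vertex of in-degree $0$ is never touched.

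The paper sidesteps all of this. Having oriented the parent side, it looks at the current in-degree $d^-_H(v)$ of the cut vertex and chooses the \emph{starting cycle} of the child block accordingly: if $d^-_H(v)\neq 2$ it uses Lemma~\ref{connectfan}, so $v$ contributes $0$ from the new block and both cycle-neighbours of $v$ receive in-degree $2\neq d^-_H(v)$; if $d^-_H(v)=2$ it uses Lemma~\ref{in1} and lets $v$ itself absorb the extra budget, jumping to in-degree $4$ while its two cycle-neighbours sit at $0$. After this two-case initialisation it simply runs the Section~\ref{2connected} algorithm on the remainder of the block. Thus, rather than proving a uniform ``avoid $t$ at all neighbours of $v$'' statement, the paper spends the fourth in-degree on the cut vertex in the single problematic case $d^-_H(v)=2$; this is considerably less than what your strengthened lemma demands.
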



\begin{theorem}\label{treefree}
For any triangle-free, tree-free, outerplanar graph $G$, we have $\vec{\chi}(G)\leq 4$. 
\end{theorem}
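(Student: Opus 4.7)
The plan is to prove Theorem \ref{treefree} by induction on the number of 2-edge-connected components of $G$, using Theorem \ref{bridgefree} for the base case. The tree-free hypothesis is equivalent to every 2-edge-connected component of $G$ having at least three vertices; since no bridge lies on a cycle, the bridges of $G$ together with its 2-edge-connected components $C_1, \ldots, C_k$ form a tree $T_G$ (the \emph{bridge-tree} of $G$). If $k = 1$, then $G$ is bridgeless and Theorem \ref{bridgefree} yields the conclusion directly.

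For $k \geq 2$, I would select a leaf $C$ of $T_G$, attached to the rest of $G$ by a single bridge $uv$ with $u \in V(C)$ and $v \notin V(C)$. The graph $G' := G - V(C)$ is triangle-free, tree-free, outerplanar, with $k-1$ 2-edge-connected components (all of size at least $3$). By the inductive hypothesis, $G'$ admits a proper 4-orientation $D'$; let $a := d^-_{D'}(v)$. Next, I would apply the key lemma stated below to $C$ with specified vertex $u$ to obtain a proper 4-orientation $D_C$ of $C$ with $b := d^-_{D_C}(u) \leq 2$. A short case analysis over $(a, b) \in \{0,1,2,3,4\} \times \{0,1,2\}$ then shows that combining $D'$, $D_C$, and a suitable orientation of the bridge $uv$ yields a proper 4-orientation of $G$: orient $u \to v$ whenever $a \leq 3$ and $b \neq a+1$, and otherwise orient $v \to u$ (in each remaining case one checks that $b+1 \leq 3$ and $b+1 \neq a$, so no in-degree exceeds $4$ and the bridge endpoints have different in-degrees).

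The main obstacle is proving the key lemma: \emph{for every triangle-free bridgeless outerplanar graph $H$ with $|V(H)| \geq 3$ and every vertex $u \in V(H)$, there exists a proper 4-orientation of $H$ with $d^-(u) \leq 2$.} This strengthens Theorem \ref{bridgefree} by constraining a specified vertex's in-degree. I expect to prove it by revisiting the construction behind Theorem \ref{bridgefree} (which itself builds on Theorem \ref{main2}). Since $H$ decomposes into 2-connected blocks meeting at cut vertices, one applies Theorem \ref{main2} to each block to get a proper 3-orientation; the delicate step is to arrange, using the flexibility of the ear-based algorithm underlying Theorem \ref{main2}, that the in-degree of $u$ inside each block containing $u$ is $0$ or $1$, so that the contributions across all such blocks sum to at most $2$, while every other cut vertex retains total in-degree at most $4$. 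This refinement should parallel, and slightly strengthen, the argument used to prove Theorem \ref{bridgefree} itself, and once it is in place, the induction above closes.
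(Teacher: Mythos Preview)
Your inductive framework has a genuine gap in the bridge-orientation step. When you orient $u\to v$, the in-degree of $v$ changes from $a$ to $a+1$; when you orient $v\to u$, the in-degree of $u$ changes from $b$ to $b+1$. Your case analysis verifies only that the two bridge endpoints end up with different in-degrees and that neither exceeds $4$. It does \emph{not} verify that the endpoint whose in-degree increased remains proper with respect to its \emph{other} neighbours. Concretely: take $a=0$ and suppose some neighbour of $v$ in $G'$ has in-degree $1$ (nothing in your induction hypothesis forbids this). If $b=0$, your rule orients $u\to v$, so $d^-(v)$ becomes $1$ and properness fails at $v$. In the other direction, if your rule orients $v\to u$ and $u$ happens to have a neighbour of in-degree $b+1$ inside $C$ (your key lemma does not exclude this), properness fails at $u$. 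So even granting the key lemma, the induction does not close.

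The paper avoids this by never orienting $C$ in isolation. It processes the blocks in BFS order along the block tree and, upon reaching a block $L_{i+1}$ attached to the already-oriented part via a bridge $ab$ with $b\in L_{i+1}$, it first orients the bridge towards $b$ (so $d^-(a)$ is left untouched and properness inside $G'$ is preserved automatically), and then chooses the orientation of the initial face of $L_{i+1}$ through $b$ \emph{depending on} $d^-(a)$: Lemma~\ref{connectfan} when $d^-(a)\neq 1$ (yielding $d^-(b)=1$ with both cycle-neighbours of $b$ at in-degree $2$), and Lemma~\ref{in1} when $d^-(a)=1$ (yielding $d^-(b)=3$ with both cycle-neighbours at in-degree $0$). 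The rest of $L_{i+1}$ is then oriented by the algorithm of Section~\ref{2connected}. The point is that the orientation of the new block is adapted to the already-fixed in-degree on the old side, rather than fixed independently and glued afterwards. Your decoupled scheme could be repaired by strengthening the key lemma to produce, for each forbidden value $a\in\{0,\dots,4\}$, a proper $4$-orientation of $C$ in which $d^-(u)\le 3$, $d^-(u)\neq a$, and incrementing $d^-(u)$ by one (to account for the bridge $v\to u$) keeps the orientation proper at $u$; but that is essentially what the paper's direct construction achieves, and is stronger than the lemma you stated.
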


In the literature, there are some other interesting upper bounds for the parameter on planar graphs. In particular,   Knox el al. \cite{KM} proved that $\vec{\chi}(G)\leq 5$ for a
$3$-connected planar bipartite graph $G$ and Noguci \cite{KN} showed that $\vec{\chi}(G)\leq 3$ for any bipartite planar graph with $\delta(G) \geq 3.$

This paper is organized as follows. In Section~\ref{sec:path}, we give some simple lemmas for proper orientations of a path. In Section~\ref{2connected}, we construct  triangle-free $2$-connected outerplanar graphs $G$ by choosing a finite face and adding to it finite faces one by one such that every newly added face shares precisely one edge; we can talk about adding paths instead of faces. Then we give an algorithm to obtain a proper $3$-orientation of $G$ by orienting each added path. In fact, the algorithm has to foresee what further paths will be added, in particular, by paying attention to an especially tricky situation of eventually constructing a $k$-fan (its definition is also given in Section~\ref{2connected}). We conclude this section by giving an example of a $2$-connected outerplanar graph $G$ with $\vec{\chi}(G)=3$.
In Section~\ref{sec:bridgefree}, we provide proofs of Theorems~\ref{bridgefree} and \ref{treefree} based on Theorem \ref{main2}. We conclude the paper in Section \ref{sec:prob}, where we discuss some open problems. 

\section{Orientations on paths}\label{sec:path}

In this section, we collect some simple lemmas about proper orientations on paths.  For a path $P$ we denote by $||P||$ the number of edges of $P$.

\begin{lemma} \label{path}
Let $P=v_1\dots v_n$ be a path.
\begin{enumerate}
\item \label{path6}  If $||P||\geq 5$ then there exists a proper orientation such that $d^{-}(v_1)=0$, $d^{-}(v_2)=1$, $d^{-}(v_{n-2})=0$, $d^{-}(v_{n-1})=2$ and $d^{-}(v_n)=0.$ 
\item \label{path5} If $||P||=4$ then there are two proper orientations such that $d^{-}(v_1)=0$ and $d^{-}(v_5)=0$ and 
\begin{enumerate}
\item \label{path5_1} $d^{-}(v_1)=0,$ $d^{-}(v_2)=1,$ $d^{-}(v_3)=2,$ $d^{-}(v_4)=1,$ $d^{-}(v_5)=0$, and 
\item \label{path5_2} $d^{-}(v_1)=0,$ $d^{-}(v_2)=2,$ $d^{-}(v_3)=0,$ $d^{-}(v_4)=2,$ $d^{-}(v_5)=0,$ respectively.
\end{enumerate}
\item \label{path4} If $||P||=3$ then there exists a proper orientation with  $d^{-}(v_1)=0,$ $d^{-}(v_2)=1,$ $d^{-}(v_3)=2$ and $d^{-}(v_4)=0.$
\end{enumerate}
\end{lemma}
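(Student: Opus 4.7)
The plan is to prove each of the three parts constructively, by exhibiting an explicit orientation of $P$ and then verifying two things: that the prescribed in-degrees hold at the listed vertices, and that adjacent vertices of $P$ receive distinct in-degrees (which is the properness condition on $P$). In parts (2) and (3) the boundary in-degrees are so restrictive that the orientation is forced: reading the prescribed in-degree sequence from left to right and orienting each edge accordingly yields the unique orientation, which one then checks in a single line. For instance, in part (3) the sequence $(0,1,2,0)$ compels $v_1\to v_2\to v_3\leftarrow v_4$; in part (2) the two sequences compel $v_1\to v_2\to v_3\leftarrow v_4\leftarrow v_5$ and $v_1\to v_2\leftarrow v_3\to v_4\leftarrow v_5$, respectively.

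The interesting case is part (1). My plan is to aim for an in-degree sequence that starts with $(0,1,\ldots)$, ends with $(\ldots,0,2,0)$, and consists of a $(0,2)$-alternation in the middle. Because this interior pattern has period $2$ while the two ends are asymmetric, I will split on the parity of $n$: for even $n\geq 6$ I target the sequence $(0,1,2,0,2,0,\ldots,2,0)$, and for odd $n\geq 7$ I target $(0,1,0,2,0,\ldots,2,0)$. Once such a target sequence is fixed, every edge orientation is forced by the rule that each vertex marked $0$ must be a local source for both of its incident edges and each vertex marked $2$ must be a local sink; the two patterns are designed precisely so that these local requirements are mutually consistent and so that the right-hand boundary $(d^-(v_{n-2}),d^-(v_{n-1}),d^-(v_n))=(0,2,0)$ is achieved. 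Properness on $P$ is then immediate from the fact that consecutive entries of the sequence differ.

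The only real (and rather minor) obstacle is the parity case split in part (1); a single uniform pattern will always collide with one of the two prescribed boundary triples when the parity of $n$ is wrong. Beyond that, the entire lemma reduces to direct inspection of the explicit orientations described above.
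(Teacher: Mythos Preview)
Your treatment of parts (2) and (3), and of the even case in part (1), is correct and essentially matches the paper's approach (the paper simply draws the orientations). Your approach to part~(1) is a direct construction split on parity, whereas the paper instead gives base cases for $\|P\|=5,6$ and then observes that any in-degree-$0$ vertex can be replaced by a length-$2$ path oriented inwards to step the length up by $2$; the two strategies are morally equivalent.

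However, your odd-$n$ construction in part~(1) is wrong as stated. You propose the in-degree sequence $(0,1,0,2,0,\ldots,2,0)$ for odd $n\geq 7$, and then invoke the rule that every vertex labelled $0$ is a local source. Apply this to $v_1$ and $v_3$: both must send their edges outward, so both $v_1v_2$ and $v_3v_2$ are oriented toward $v_2$, forcing $d^-(v_2)=2$, not $1$. Thus the local source/sink requirements are \emph{not} mutually consistent with your target sequence in the odd case; the pattern $(0,1,0,\ldots)$ is simply unrealisable on a path. One easy fix is to take $(0,1,2,1,0,2,0,\ldots,2,0)$ for odd $n\geq 7$ (i.e.\ insert a $2,1$ after the initial $0,1$ before entering the $0,2$-alternation): now $v_2$ sits between a $0$ and a $2$, and one checks directly that the resulting orientation $v_1\to v_2\to v_3\leftarrow v_4\leftarrow v_5\to v_6\leftarrow v_7\to\cdots$ has all the prescribed boundary in-degrees and is proper. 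Alternatively, you can adopt the paper's extension trick and avoid the parity split altogether.
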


\begin{proof}
	
\begin{enumerate}
\item
First, let us remark that every vertex of in-degree $0$ in a proper orientation of a path with $n$ vertices can be replaced by 
\includegraphics[scale=0.25]{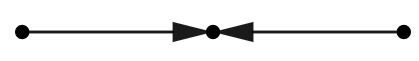}
to obtain a proper orientation of a path with $n+2$ vertices. So it suffices to find an orientation of a path of length $5$ and $6$ as we can then extend these paths by repeatedly replacing $v_{n-2}$ as above. These orientations can be easily found:

\begin{center}
	\includegraphics[scale=0.25]{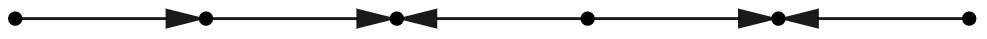}\\
	\includegraphics[scale=0.25]{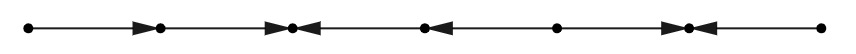}
\end{center}

\item 


\includegraphics[scale=0.25]{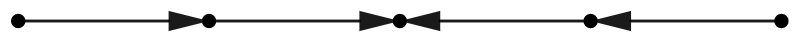}\\
\includegraphics[scale=0.25]{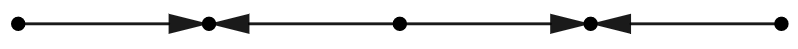}

\item
%
\includegraphics[scale=0.25]{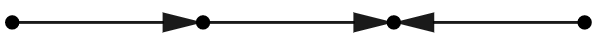}

\end{enumerate}
\end{proof}

Lemma~\ref{path} is simple, but very helpful. Let $G$ be a graph with a proper orientation. If we are given a path $P$ with $||P||\not=4$, then we can add $P$ to any of the edges of $G$ and using the orientations of Lemma~\ref{path} to obtain a proper orientation of the new graph. To see this, consider an edge $e=\{a,b\}$ and 
 a proper orientation $D$ of $G$. 
If $d_D^{-}(a)=2$, then we use the orientation of $P$ such that $a=v_1$ and $b=v_n$. As $D$ is a proper orientation, we know that $d_D^{-}(b)\not= 2$ and therefore the orientation of the new graph is proper. 
(In what follows, we omit subscripts in in-degrees when the orientation is clear from the context.)
If $d^{-}(a)=1$, then we use the orientation of $P$ such that $a=v_n$ and $b=v_1$. As before, $D$ is a proper orientation so $d^{-}(b)\not= 1$ and therefore the orientation of the new graph is proper. Now we can reverse the roles of $a$ and $b$. Finally if $d^{-}(a), d^{-}(b)\not\in \{1,2\}$, then we can just set $a=v_1$ and $b=v_n$ (or the other way round). Note that the orientations of the edges in $G$ and the in-degrees of any vertex in $G$ do not change.

If $||P||=4$, then the situation becomes more complicated if $d^{-}(a)=1$ and $d^{-}(b)=2$. (In all other cases we just choose the orientation of Lemma~\ref{path} that avoids the in-degrees.) To deal with this case, we need other proper orientations of graphs. Some orientations are repeated for easier reference later.

\begin{lemma}\label{connectfan}
Let $P=v_1\dots v_n$ be a path with $||P||\geq 4$.  Then there exists a proper orientation such that $d^{-}(v_1)=0$, $d^{-}(v_2)=2$, $d^{-}(v_{n-1})=2$ and $d^{-}(v_n)=0.$ 
 \end{lemma}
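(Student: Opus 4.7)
The plan is to follow the same blueprint as Lemma~\ref{path} part~\ref{path6}: exhibit explicit proper orientations on two short base paths (one of each parity of $n$), and then extend them by repeatedly applying the ``in-degree-$0$ replacement'' gadget, which turns a proper orientation on a path into one on a path with two more vertices while preserving all existing in-degrees and hence also the four boundary in-degrees prescribed in the statement.

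For the base case $n=5$, I would orient $v_1\!\to\! v_2$, $v_3\!\to\! v_2$, $v_3\!\to\! v_4$, $v_5\!\to\! v_4$, producing the in-degree sequence $(0,2,0,2,0)$, which is proper and meets the required boundary values. For $n=6$, I would orient $v_1\!\to\! v_2$, $v_3\!\to\! v_2$, $v_3\!\to\! v_4$, $v_4\!\to\! v_5$, $v_6\!\to\! v_5$, producing $(0,2,0,1,2,0)$, again proper with $d^-(v_1)=0$, $d^-(v_2)=d^-(v_{n-1})=2$, and $d^-(v_n)=0$. For $n\ge 7$, I would proceed by induction: start from the orientation for $n-2$ satisfying the conclusion, locate an interior in-degree-$0$ vertex disjoint from the protected boundary set $\{v_1,v_2,v_{n-1},v_n\}$ (in both base orientations, $v_3$ plays this role), and substitute it with the three-vertex gadget from Lemma~\ref{path} part~\ref{path6}. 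The gadget leaves the in-degrees of the two old neighbours unchanged, so properness and the required boundary values are automatically inherited by the longer path.

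The main obstacle is only a bookkeeping one: after each gadget substitution, the two newly inserted in-degree-$0$ vertices must be adjacent only to vertices of nonzero in-degree (else properness fails), and the longer path must still contain an interior in-degree-$0$ vertex on which to run the next induction step. Both are transparent from the base orientations---$v_3$ has in-degree $0$ and sits between $v_2$ (in-degree $2$) and $v_4$ (in-degree $1$ or $2$)---and the gadget itself reintroduces such vertices (its two end-vertices have in-degree $0$ and are adjacent to a new vertex of in-degree $2$ and to an old neighbour whose in-degree is positive). Consequently the induction carries through for all $n\ge 5$, yielding the required orientation.
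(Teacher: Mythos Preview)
Your proof is correct and follows essentially the same approach as the paper: the paper likewise invokes the in-degree-$0$ replacement gadget from Lemma~\ref{path}, notes that the $||P||=4$ base case is already supplied by Lemma~\ref{path} Case~\ref{path5_2} (your $(0,2,0,2,0)$), and then only needs to exhibit an orientation for $||P||=5$. Your explicit $(0,2,0,1,2,0)$ orientation for $n=6$ and your bookkeeping check that an interior in-degree-$0$ vertex persists after each substitution make the induction fully explicit, but the underlying argument is the same.
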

 
 \begin{proof}
 As in the proof of Lemma~\ref{path}, we can replace in any path with a proper orientation a vertex of in-degree $0$ by a path of length $2$ with both edges oriented inwards to obtain a proper orientation for a path with two more edges. Observe that the second orientation of a path of length $4$ satisfies the conditions of this lemma, it suffices to give an orientation of a path of length $5$.
%
\begin{center}
	\includegraphics[scale=0.25]{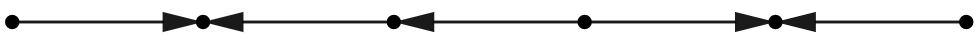}
\end{center}

\end{proof}

\begin{lemma}\label{in1}
Let $P=v_1\dots v_n$ be a path with $||P||\geq 4$. Then there exists a proper orientation such that $d^{-}(v_1)=1$, $d^{-}(v_2)=0$, $d^{-}(v_{n-1})=0$ and $d^{-}(v_n)=1.$ 
\end{lemma}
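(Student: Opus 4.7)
The plan is to orient the two end-edges $v_1v_2$ and $v_{n-1}v_n$ outward from their inner endpoints, namely $v_2 \to v_1$ and $v_{n-1} \to v_n$; this immediately achieves $d^-(v_1) = d^-(v_n) = 1$ and contributes nothing to the in-degrees at $v_2$ or $v_{n-1}$. It then suffices to orient the subpath $Q = v_2 v_3 \cdots v_{n-1}$ so that the orientation of $Q$ is itself proper and $d^-_Q(v_2) = d^-_Q(v_{n-1}) = 0$.

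For $||P|| \geq 6$ we have $||Q|| \geq 4$, so Lemma~\ref{connectfan} applies to $Q$ (with endpoints $v_2$ and $v_{n-1}$) and produces a proper orientation with $d^-_Q(v_2) = 0$, $d^-_Q(v_3) = 2$, $d^-_Q(v_{n-2}) = 2$, $d^-_Q(v_{n-1}) = 0$. Reattaching the two end-edges then gives the in-degree profile $(1,0,2,\ldots,2,0,1)$ along $P$; the only adjacencies whose properness is not inherited directly from Lemma~\ref{connectfan} are the four edges incident to $v_1, v_2, v_{n-1}, v_n$, and these give the pairs $1 \neq 0$, $0 \neq 2$, $2 \neq 0$, $0 \neq 1$.

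For the two values $||P|| = 4$ and $||P|| = 5$, where $Q$ is too short to apply Lemma~\ref{connectfan}, I would simply exhibit an orientation by inspection in the style of the proofs of Lemmas~\ref{path} and \ref{connectfan}. Concretely, the arcs $v_2 \to v_1$, $v_2 \to v_3$, $v_4 \to v_3$, $v_4 \to v_5$ give in-degree sequence $(1,0,2,0,1)$ for $||P|| = 4$, and the arcs $v_2 \to v_1$, $v_2 \to v_3$, $v_3 \to v_4$, $v_5 \to v_4$, $v_5 \to v_6$ give $(1,0,1,2,0,1)$ for $||P|| = 5$.

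I do not anticipate any real obstacle: Lemma~\ref{connectfan} was set up with exactly the boundary profile $(0, 2, \ldots, 2, 0)$ needed so that the two end-edges can be reattached, promoting the $0$'s at $v_1$ and $v_n$ into $1$'s without disturbing the $0$'s at their neighbours. The only slightly fiddly point is that Lemma~\ref{connectfan} requires length at least $4$, which is why the two short base cases must be treated separately; both are handled in a single line each.
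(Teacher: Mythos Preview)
Your proof is correct. The base cases for $\|P\|=4$ and $\|P\|=5$ are fine, and for $\|P\|\geq 6$ your reduction to Lemma~\ref{connectfan} on the inner subpath $Q=v_2\cdots v_{n-1}$ works as stated: the two end-arcs $v_2\to v_1$ and $v_{n-1}\to v_n$ add nothing to $d^-(v_2)$ or $d^-(v_{n-1})$, and the only adjacencies not already covered by the properness of $Q$ are $v_1v_2$ and $v_{n-1}v_n$, which you verify. (One cosmetic remark: writing the resulting profile as $(1,0,2,\ldots,2,0,1)$ is a little misleading, since the interior of $Q$ is not literally a run of $2$'s; but your argument does not rely on that, only on the four boundary values and the fact that $Q$'s orientation is proper.)

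The paper takes a slightly different route: it does not invoke Lemma~\ref{connectfan} but instead repeats the same ``replace a vertex of in-degree $0$ by a path of length $2$ with both edges oriented inwards'' extension trick used throughout Section~\ref{sec:path}, reducing to explicit orientations for $\|P\|=4$ and $\|P\|=5$. Your approach is marginally more economical in that it recycles the induction already packaged inside Lemma~\ref{connectfan} rather than redoing it; the paper's approach is more self-contained. Both need the same two base cases, so the difference is essentially cosmetic.
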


\begin{proof}
As in the proof of Lemma~\ref{path}, we can replace in any path with a proper orientation a vertex of in-degree $0$ by a path of length $2$ with both edges oriented inwards to obtain a proper orientation for a path with two more edges. So we need to exhibit orientations for the paths of length $4$ and $5$.

\begin{center}
	\includegraphics[scale=0.25]{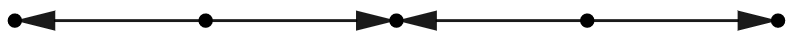}\\
	\includegraphics[scale=0.25]{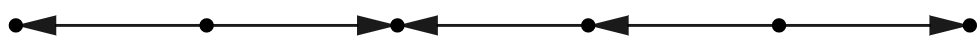}
\end{center}
%
%
%
%
%
%
%
%

\end{proof}
 
 \begin{lemma}\label{connectfan23}
Let $P=v_1\dots v_n$ be a path.
 \begin{enumerate}
\item  If $||P||\geq 5$ and $||P||$ is odd, then there exists a proper orientation such that $d^{-}(v_1)=0$, $d^{-}(v_2)=2$, $d^{-}(v_{n-2})=0$, $d^{-}(v_{n-1})=2$ and $d^{-}(v_n)=0$.

\item If $||P||\geq 6$ and $||P||$ is even, then there exists a proper orientation such that $d^{-}(v_1)=0$, $d^{-}(v_2)=2$, $d^{-}(v_{n-1})=1$ and $d^{-}(v_n)=0$.

\item If $||P||=4$, then there exists a proper orientation such that $d^{-}(v_1)=0$, $d^{-}(v_2)=2$, $d^{-}(v_{3})=d^{-}(v_{n-2})=0$, $d^{-}(v_{4})=d^{-}(v_{n-1})=2$ and $d^{-}(v_{5})=d^{-}(v_n)=0$. 

\item If $||P||=3$, then there exists a proper orientation such that $d^{-}(v_1)=0$, $d^{-}(v_2)=2$, $d^{-}(v_3)=1$, $d^{-}(v_4)=0$.
\end{enumerate}

\end{lemma}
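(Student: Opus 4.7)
The plan is to mirror the technique used in Lemmas~\ref{path}, \ref{connectfan}, and \ref{in1}: exhibit a short base orientation for each parity class and then inflate it by iterating the ``replace an in-degree-$0$ vertex with a length-$2$ subpath whose two new edges point inward'' move, which adds $2$ to $||P||$, preserves properness, and changes no in-degree outside the inserted block (in particular, applied at a strictly interior vertex it leaves the endpoints of the path untouched).

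First I dispose of the short cases. For case~(3), $||P||=4$, the required profile $(0,2,0,2,0)$ is already delivered by the second orientation of Lemma~\ref{path}(\ref{path5_2}), namely $v_1\to v_2$, $v_3\to v_2$, $v_3\to v_4$, $v_5\to v_4$. For case~(4), $||P||=3$, I would use $v_1\to v_2$, $v_3\to v_2$, $v_4\to v_3$, yielding in-degrees $(0,2,1,0)$ and clearly proper.

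For case~(1) the base $||P||=5$ is handled by $v_1\to v_2$, $v_3\to v_2$, $v_4\to v_3$, $v_4\to v_5$, $v_6\to v_5$, producing in-degrees $(0,2,1,0,2,0)$, which meets all five endpoint constraints. Since $v_4$ is interior with in-degree~$0$, applying the splitting move at $v_4$ (or at a copy of an in-degree-$0$ interior vertex in a previously enlarged path) produces the next odd value of $||P||$. Analogously for case~(2), the base $||P||=6$ is handled by $v_1\to v_2$, $v_3\to v_2$, $v_4\to v_3$, $v_4\to v_5$, $v_6\to v_5$, $v_7\to v_6$, producing $(0,2,1,0,2,1,0)$; again $v_4$ is an interior in-degree-$0$ vertex, and repeated splitting realizes every even $||P||\geq 6$.

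The only bookkeeping point---and the only real obstacle---is verifying that the positional shift from each split preserves the constraints at the right-hand end of the path. When the move is applied at a position $j$, every original vertex $v_k$ with $k>j$ has its index shifted up by $2$ with no change in in-degree; hence the original right-end pattern is preserved on the new right-end vertices whenever the split sits strictly to the left of the constrained suffix, and the borderline case in which the split happens at a constrained in-degree-$0$ position is also fine because the rightmost vertex of the inserted block is again in-degree~$0$, with the other two right-end vertices simply inherited from the previous path. That is enough to close the induction.
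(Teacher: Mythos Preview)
Your proof is correct and follows essentially the same route as the paper: handle $||P||=3,4$ by citing (or re-deriving) the orientations of Lemma~\ref{path}, exhibit explicit base orientations for $||P||=5$ and $||P||=6$, and extend to all larger lengths of the appropriate parity via the ``replace an interior in-degree-$0$ vertex by an inward-pointing length-$2$ segment'' move. Your explicit arrow lists agree with the paper's (picture-based) base cases, and your discussion of how the index shift preserves the right-end constraints is in fact more careful than the paper's.
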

\begin{proof}
As in the proof of Lemma~\ref{path}, we can replace in any path with a proper orientation a vertex of in-degree $0$ by a path of length $2$ with both edges oriented inwards to obtain a proper orientation for a path with two more edges. So we only have to consider the paths of length $5$ and $6$ respectively.
\begin{enumerate}
\item  This is the same proper orientation on a path of length $5$ as given in the proof of Lemma~\ref{connectfan}.
\item  
%
\includegraphics[scale=0.25]{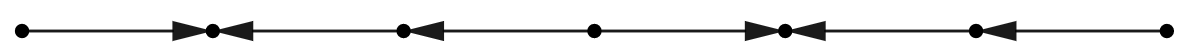}

\item This is the same statement as in Lemma~\ref{path} Case \ref{path5_2}.
\item This is the same statement about path of length $3$ as in Lemma~\ref{path} just in reverse order.
\end{enumerate}
\end{proof}

 \begin{lemma}\label{connectfan23a}
 Let $P=v_1\dots v_n$ be a path.
 \begin{enumerate}
\item If $||P||\geq 5$ and $||P||$ is odd, then there exists a proper orientation such that $d^{-}(v_1)=0$, $d^{-}(v_2)=1$, $d^{-}(v_{n-2})=0$, $d^{-}(v_{n-1})=2$ and $d^{-}(v_n)=0$.
\item If $||P||\geq 4$ and $||P||$ is even, then there exists a proper orientation such that $d^{-}(v_1)=0$, $d^{-}(v_2)=1$, $d^{-}(v_{n-1})=1$ and $d^{-}(v_n)=0$.
\end{enumerate}
\end{lemma}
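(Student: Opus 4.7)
The plan is to follow the same inductive template used throughout Section~\ref{sec:path}: observe that inside any proper orientation of a path, any vertex of in-degree $0$ may be replaced by a length-$2$ subpath whose two edges are oriented inward (this operation preserves properness and increases the number of edges by $2$ while keeping the in-degrees at all other vertices the same). Consequently, to establish both parts of the lemma it suffices to exhibit a proper orientation for the shortest path of each type and then inflate some interior in-degree-$0$ vertex to build up paths of all larger lengths of the right parity. Crucially, I would keep the inflation away from the special vertices $v_1, v_2, v_{n-2}, v_{n-1}, v_n$, so the boundary in-degrees specified in the statement are untouched as we lengthen.

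For part (1) the base case is $||P||=5$, i.e.\ $P=v_1v_2v_3v_4v_5v_6$. I would orient $v_1\to v_2\to v_3\leftarrow v_4\to v_5\leftarrow v_6$; a quick check gives the in-degree sequence $(0,1,2,0,2,0)$, which matches the required boundary conditions $d^-(v_1)=0,\ d^-(v_2)=1,\ d^-(v_{n-2})=0,\ d^-(v_{n-1})=2,\ d^-(v_n)=0$ and is proper. For part (2) the base case is $||P||=4$, i.e.\ $P=v_1v_2v_3v_4v_5$. I would orient $v_1\to v_2\to v_3\leftarrow v_4\leftarrow v_5$, which gives in-degrees $(0,1,2,1,0)$, proper and matching $d^-(v_1)=0,\ d^-(v_2)=1,\ d^-(v_{n-1})=1,\ d^-(v_n)=0$.

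For the inductive step, in part (1) I would inflate an in-degree-$0$ vertex strictly between $v_2$ and $v_{n-2}$ (for instance, $v_4$ in the base orientation, which has in-degree $0$), so the left boundary data $(d^-(v_1),d^-(v_2))=(0,1)$ and the right boundary data $(d^-(v_{n-2}),d^-(v_{n-1}),d^-(v_n))=(0,2,0)$ are preserved, and the length increases by $2$, remaining odd. In part (2) the base path has in-degree-$0$ vertex $v_1$ and $v_5$ only among the five vertices, so I would first need an interior $0$ to inflate; the cleanest way is to take the base and insert one length-$2$ extension once to obtain the length-$6$ case, then inductively inflate interior zeros from then on. Concretely, one can check that a proper orientation for $||P||=6$ with the required boundary data exists and contains an interior in-degree-$0$ vertex between positions $3$ and $n-2$, and from there the replacement trick yields all larger even lengths.

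I do not expect any real obstacle: the reductions are entirely analogous to those already carried out in Lemmas~\ref{path}, \ref{connectfan}, \ref{in1}, and \ref{connectfan23}. The only point demanding mild care is in part (2), where I must make sure that the inflation vertex lies strictly in the interior of the path (away from $v_1, v_2, v_{n-1}, v_n$) so that the prescribed boundary in-degrees are not disturbed as the length grows; once that placement is fixed, the induction runs immediately.
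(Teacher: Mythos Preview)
Your proposal is correct and follows essentially the same approach as the paper: the paper also invokes Lemma~\ref{path} Case~\ref{path5_1} for $||P||=4$, exhibits explicit orientations for $||P||=5$ and $||P||=6$, and then appeals to the same in-degree-$0$ inflation trick to cover all longer paths of the appropriate parity. One small point of care: you cannot literally ``take the base and insert one length-$2$ extension'' to pass from $||P||=4$ to $||P||=6$, since the only in-degree-$0$ vertices in $(0,1,2,1,0)$ are the endpoints and inflating either one spoils the required boundary data---so, as the paper does and as you yourself then note, the $||P||=6$ orientation (e.g.\ $(0,1,2,0,2,1,0)$) must be given separately.
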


\begin{proof}
For $||P||=4$, the orientation in Lemma~\ref{path} Case \ref{path5_1} satisfies the condition. For the remaining cases
as in the proof of Lemma~\ref{path} we can replace in any path with a proper orientation a vertex of in-degree $0$ by a path of length $2$ with both edges oriented inwards to obtain a proper orientation for a path with two more edges. So we only have to consider the paths of length $5$ and $6$ respectively.
\begin{enumerate}
\item 
%
\includegraphics[scale=0.25]{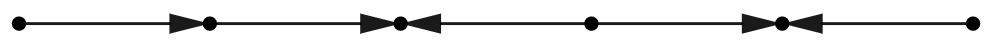}

\item
%
\includegraphics[scale=0.25]{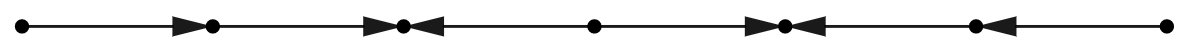}

\end{enumerate}
\end{proof}

\section{Proof of Theorem \ref{main2}}\label{2connected}

\subsection{Description of the algorithm}

Let $G$ be a 2-connected, triangle-free, outerplanar graph. Consider an outerplanar embedding $G'$ of $G$ into the plane such that all vertices of $G$ belong to the infinite face of $G'.$ We can construct $G$ by choosing a finite face (which is a chordless cycle in $G$) and adding to it finite faces one by one such that every newly added face shares a vertex with one of the previously added faces. Since $G$ is $2$-connected, we may assume that every newly added face shares at least an edge with one of the previously added faces. Moreover, the added face shares precisely one edge since $G'$ is an outerplanar embedding. In what follows, we will talk about adding paths, instead of faces, between the vertices of an edge: such a path and the corresponding edge form the boundary of the corresponding face. We will say that such a path is {\em attached} to the corresponding edge.

To describe an algorithm that provides a proper 3-orientation of $G$, we need the following terminology. We call an edge \emph{active} if there will be a path attached to it in the future.  Otherwise we call it \emph{inactive}. Observe that when we add a path some (but maybe not all) of its edges will be active, then they become inactive and never become active again. 
Let $D$ be an orientation of a subgraph $H$ of $G.$ We call a vertex of $D$ with in-degree $2$ adjacent to a vertex of in-degree $3$ a \emph{trouble maker}. An edge of $H$ is called a {\em $1$-$2$ edge} if the in-degrees of its end-vertices in $D$ are $1$ and $2$.

It turns out that active $1$-$2$ edges are problematic when the vertex with in-degree $2$ is a trouble maker. So assume we have an active $1$-$2$ edge $\{a,b\}$ so that $d^{-}(a)=1$, $d^{-}(b)=2$ and $b$ is a trouble maker.
Assume we  attach to it a path of length $3$ oriented as in Lemma~\ref{path} part \ref{path4}. This gives a new active $1$-$2$ edge with $b$ as a trouble maker. We can repeat this $k-1$ times (to get $k$ paths of length $3$ altogether), see the following picture for $k=3$:
\begin{center}
%
%
%
\includegraphics[scale=0.5]{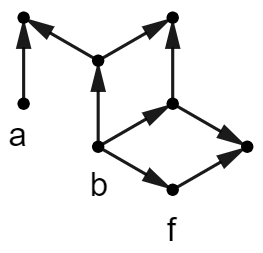}
\end{center}

If we now want to add a path of length $4$ to the active $1$-$2$ edge $\{b,f\}$ without changing the existing orientation we would get a problem. If a path of length $4$ is indeed added we call this structure a \emph{$k$-fan} and use the orientation as in Figure~\ref{normal_kfan}.  Later in our construction, we will always make sure that the path between $a$ and $b$ is oriented in a way that the in-degree of $a$ will be $3$ (that is, both edges incident to $a$ will be oriented towards $a$) and $b$ has in-degree $2$. Note that a $0$-fan is simply a path of length $4$ and that we do not include the original $1$-$2$ edge in our $k$-fan (we include only the $k$ paths of length $3$ and the path of length $4$).
\begin{figure}[h]
\begin{center}
%
%
%
%
%
%
%
%
%
%
%
%
%
%
%
%
\includegraphics[scale=0.5]{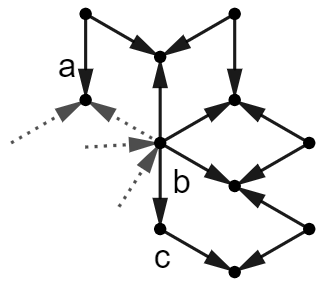}
\end{center}
\caption{\label{normal_kfan} The orientation of a $k$-fan if no $k'$-fan is added to $\{b,c\}$}
\end{figure}

Note that  the orientation in Figure~\ref{normal_kfan} creates a new  $1$-$2$ edge $\{b,c\}$ which may be active.  If another $k'$-fan is added to this edge then we use the orientation in Figure~\ref{added_kfan} where both edges in the path of length $4$ are oriented towards $c$ (otherwise the two orientations are identical). Note that $c$ will have in-degree equal to $3$ when a $k'$-fan is added and all the other in-degrees in the original $k$-fan (including $b$'s) will not change.
\begin{figure}[h]
\begin{center}
%
%
%
%
%
%
%
%
%
%
%
%
%
%
%
%
\includegraphics[scale=0.5]{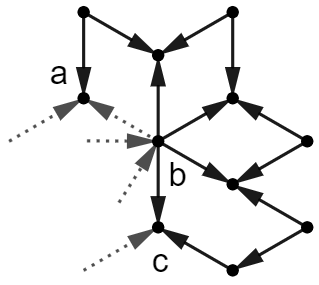}
\end{center}
\caption{\label{added_kfan} The orientation of a $k$-fan if another $k'$-fan is added to $\{b,c\}$}
\end{figure}

As mentioned before edges with a vertex of in-degree $1$ and in-degree $2$ where the latter is a trouble maker may cause problems. We will anticipate these problems  using the following  procedure ( {\bf Procedure-1}), where $e$ is an edge and $x\in e$ is a trouble maker. Note that calling {\bf Procedure-1} often creates a new active $1$-$2$ edge and we deal with this by repeatedly calling {\bf Procedure-1} until there is no active $1$-$2$ edge stemming from the original edge $e$. \\[0.2cm]

\noindent {\bf {\bf Procedure-1}}$(e,x)$\\
{\bf If} no path gets attached to $e$ {\bf then} exit.\\
{\bf If} a $k$-fan gets attached to $e$ {\bf then} let $c$ be the vertex of the $5$-cycle of degree $2$ adjacent to $x$.\\
\hspace*{0.5cm} \parbox{12cm}{  
 {\bf If }$\{x,c\}$ will get attached a $k'$-fan {\bf then} use the  orientation of a $k$-fan as in Figure~\ref{added_kfan} and run {\bf Procedure-1}$(\{x,c\},x)$.\\
   {\bf otherwise} (no $k'$-fan) use the orientation of a $k$-fan as in Figure~\ref{normal_kfan} and run {\bf Procedure-1}$(\{x,c\},x).$ }\\
 {\bf otherwise} (no $k$-fan gets attached to $e$) let $P$ be the path that gets attached to $e$ and let $c$ be the vertex adjacent to $x$ in $P$. \\
 \hspace*{0.5cm} \parbox{12cm}{   {\bf If} a $k$-fan is attached to $\{x,c\}$ {\bf then} use Lemma~\ref{connectfan} to orient $P$ and call {\bf Procedure-1}$(\{x,c\},x)$  [Note that $c$ will have in-degree $3$ after {\bf Procedure-1} has finished and that $||P||\not=3$ as otherwise $P$ would be part of the fan.]\\
 {\bf otherwise} use Lemma~\ref{path} to orient $P$.
  Let $e'$ be the edge of $P$ incident to $x$. Call {\bf Procedure-1}$(e',x)$.
 }
\\[0.2cm]

Now we can describe our algorithm, which uses the following {\bf Procedure-2}  to avoid too many nested if-then statements. This procedure is only used in one particular case if a path $P$ of length $3$ is added to an edge $\{a,b\}$ with trouble maker $a$ and with $b$ having in-degree $3$.  In this case, if we use Lemma~\ref{path} we will create two active $1$-$2$ edges and therefore we have to be a bit more careful. The edge $e'$ will be the edge incident to $a$ on $P$ and we will assume that there is no $k$-fan on $e'$ when calling this procedure. The reader may want to come back to this procedure when it is called in the algorithm.\\[0.2cm]

\noindent{\bf Procedure-2}$(e',a,b)$\\
{\bf If} there will be a $k$-fan attached to the edge $e''$ of $P$ that is not incident to $a$ or $b$ {\bf then}

\hspace*{0.3cm} {\parbox{12cm}{   {\bf If}  it is a $0$-fan we use the following orientation:
\begin{center}
%
%
\includegraphics[scale=0.5]{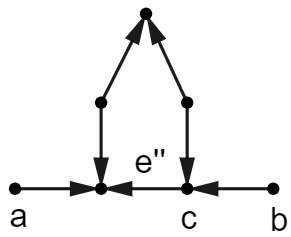}
\end{center}
 {\bf Otherwise} we use the following orientation for $P$ and the first path of length $3$ of the $k$-fan:
 
 \begin{center}
\includegraphics[scale=0.5]{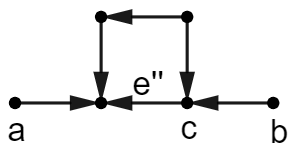}
\end{center}

 \mbox{}[Note that this does not create a new $1$-$2$ edge.]}\\
 
\noindent {\bf otherwise} use Lemma~\ref{path} to orient $P$ and call  {\bf Procedure-1}$(e',b)$ and then {\bf Procedure-1}$(e'',c)$ where $e''$ is the edge of $P$ that is not incident to $a$ or $b$ and $c$ is the vertex of this edge of in-degree $2$ (so adjacent to $b$)).\\[0.1cm]

\noindent {\bf Algorithm} \\
(A0) Choose an embedding of $G$ with all the vertices on the outer face. Choose a face and orient it using Lemma~\ref{path} identifying $v_1$ and $v_n$ (or orient it in any other proper way).\\
\noindent {\bf While} there exists an active edge $e$ we do the following:\\
Let $e=\{a,b\}$. W.l.o.g.\ we may assume that $d^{-}(a)<d^{-}(b)$. Let $P$ be the path that is attached to $e$.\\
\begin{enumerate}
\item[(A1)] {\bf If} neither $a$ nor $b$ is a trouble maker and $||P||\not=4$ {\bf then} orient $P$ as in Lemma~\ref{path}.
\item[(A2)] {\bf If} neither $a$ nor $b$ is a trouble maker and $||P||=4$ {\bf then} \\
\hspace*{0.3cm} \parbox{12cm}{ {\bf if} $d^{-}(a)=1$ and $d^{-}(b)=2$ {\bf then} use the following orientation (from $a$ to $b$):
\includegraphics[scale=0.25]{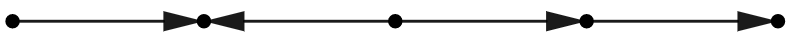}
\\
\mbox{} [Note that this introduces a vertex of in-degree $3$]\\
{\bf otherwise} [not a $1$-$2$ edge] use Lemma~\ref{path}.
}
\item[(A3)] {\bf If} $b$ is a trouble maker,  let $e'$ be the edge of $P$ incident to $b$\\
(We will see later that $d^{-}(a)\not=1.$)\\
\hspace*{0.3cm} \parbox{12cm}{ {\bf If} a $k$-fan will be attached to $e'$ we use Lemma~\ref{path} such that the in-degree $2$ vertex $c$ of $P$ is adjacent to $b$ (so $b=v_n$), and call {\bf Procedure-1}$(e',b)$. [Note that after we called {\bf Procedure-1} the in-degree of $b$ will be $2$ and the in-degree of the other vertex $c$ incident to $e'$ will be $3$.] }\\
{\bf Otherwise} we use Lemma~\ref{path} in the usual way and call {\bf Procedure-1}$(e',b)$.

\item[(A4)] {\bf If} $a$ is a trouble maker, let $e'$ be the edge on $P$ incident to $a$\\
\hspace*{0.3cm} {\parbox{12cm}{ {\bf If} a $k$-fan will be attached to $e'$ we use Lemma~\ref{connectfan23} with $v_1=a$ and call {\bf Procedure-1}$(e',a)$.\\
{\bf Otherwise} 
{\bf If} $||P||=3$ call {\bf Procedure-2}  $(e',a,b)$}\\  
\hspace*{2.4cm} {\bf otherwise} we use Lemma~\ref{connectfan23a} to orient $P$ and call\\ 
\hspace*{2.4cm} {\bf Procedure-1}$(e',a)$.
}
\end{enumerate}

\subsection{Correctness of the algorithm}

Let $D$ be an orientation of a subgraph $H$ of $G.$ A path $Q$ of length 2 in $H$ is a {\em $1$-$2$-$3$ path} (with respect to $H$) if the internal vertex of $Q$ is of in-degree $2$ and the other two vertices are of in-degrees $1$ and $3$ in $D$. 

\begin{lemma}\label{first}
The only way to get an active edge $e=\{a,b\}$ such that $a$ is a trouble maker and $d^{-}(b)=1$ is to attach a path or a $k$-fan to an edge incident to a trouble maker. 
\end{lemma}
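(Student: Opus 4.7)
The plan is to proceed by induction on the attachment steps of the algorithm, showing that in cases (A1) and (A2) --- those in which the attached-to edge $\{a,b\}$ has no trouble-maker endpoint --- no new active $1$-$2$ edge with a trouble-maker endpoint can be created. Since (A3) and (A4) (together with the $k$-fan sub-routine they invoke via \textbf{Procedure-1}) are precisely the cases that attach to an edge incident to a trouble maker, the lemma will follow. The base case is immediate: after step (A0) the initial cycle has all in-degrees in $\{0,1,2\}$ by Lemma~\ref{path}, so no trouble makers exist.

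For case (A1), I would exploit two features of Lemma~\ref{path}: every interior in-degree of $P$ lies in $\{0,1,2\}$, and $d^{-}_P(v_1)=d^{-}_P(v_n)=0$, so the in-degrees of $a$ and $b$ are unchanged. Consequently no existing vertex can become a trouble maker (its in-degree does not change, and any new neighbour on $P$ has in-degree at most $2$), and the only interior candidate is $v_{n-1}$ (in-degree $2$, adjacent to the existing $v_n$), which is a trouble maker exactly when $d^{-}(v_n)=3$. Because $d^{-}(a)\ne d^{-}(b)$, at most one of them has in-degree $3$; moreover the other endpoint has $d^{-}\ne 2$, for otherwise it would be adjacent to the in-degree-$3$ endpoint and therefore itself a trouble maker, contradicting the case assumption. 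The identification from the discussion following Lemma~\ref{path} can thus always be chosen with $v_n$ the endpoint of in-degree $\ne 3$, which avoids $v_{n-1}$ becoming a trouble maker.

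For case (A2), the non-$1$-$2$ sub-case is handled in the same style via Lemma~\ref{path} part~2, using an appropriate choice between parts~2(a) and~2(b) together with a suitable identification. The delicate sub-case is $d^{-}(a)=1,\,d^{-}(b)=2$, where the special orientation pushes $d^{-}(b)$ up to $3$. Even here no new trouble maker arises: the pre-existing neighbours of $b$ all had in-degree $\ne 2$ (by properness of the orientation before the attachment, since $d^{-}(b)$ was $2$), so none of them acquires trouble-maker status when $b$ jumps to in-degree $3$; and among the new interior vertices, the only one of in-degree $2$ is $v_2$, whose neighbours are $a$ (in-degree $1$) and $v_3$ (in-degree $0$), neither of in-degree $3$.

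The main obstacle is keeping the case analysis honest across combinations of $d^{-}(a)$, $d^{-}(b)$, and $||P||$: in a few of them (notably $d^{-}(a)=0,\,d^{-}(b)=3,\,||P||=3$) a ``naive'' identification $a=v_1,b=v_n$ would in fact produce an active $1$-$2$ edge with trouble-maker endpoint. One must therefore verify for each sub-case that the identification rule following Lemma~\ref{path} always places the interior in-degree-$2$ vertex of $P$ next to the non-in-degree-$3$ endpoint, and that the special orientation in (A2) does not spread trouble-maker status to any existing neighbour of $b$. Once this bookkeeping is completed, the invariant ``every active $1$-$2$ edge with trouble-maker endpoint was created by attaching a path or a $k$-fan to an edge incident to a trouble maker'' is preserved at each step, completing the induction.
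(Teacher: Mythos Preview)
Your approach is essentially the paper's: both argue that steps (A1) and (A2) --- the cases where the attachment edge has no trouble-maker endpoint --- cannot produce an active $1$-$2$ edge lying on a $1$-$2$-$3$ path, so that any such edge must come from (A3), (A4) or the procedures they call. The paper's own proof is considerably terser than yours: it only tracks the creation of \emph{new} in-degree-$3$ vertices, noting that (A1) creates none and that the single one created in the special sub-case of (A2) had in-degree $2$ beforehand and hence (by properness) has no neighbour of in-degree $2$. Your write-up covers the complementary possibility the paper leaves implicit, namely a \emph{new} in-degree-$2$ interior vertex landing next to an existing in-degree-$3$ endpoint; you correctly isolate $v_{n-1}$ (resp.\ $v_3$ when $||P||=3$) as the only candidate and observe that the identification rule following Lemma~\ref{path} can be taken so that this vertex sits next to the non-in-degree-$3$ endpoint. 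In particular, your flagging of the $d^-(a)=0$, $d^-(b)=3$, $||P||=3$ sub-case --- where the ``default'' identification $a=v_1$, $b=v_n$ would create an active $1$-$2$ edge with trouble maker $v_3$ --- is a genuine detail the paper does not spell out; the paper's ``(or the other way round)'' is what rescues it, and you make this explicit. So your argument is a correct and more careful fleshing-out of the same idea rather than a different route.
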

\begin{proof}
We first look at ways to create vertices of in-degree $3$. In step (A1) we cannot create a new vertex of in-degree $3$. In step (A2) we create a vertex $x$ of in-degree $3$, but it cannot have a neighbour of in-degree $2$ as we started with a proper orientation and $x$ had in-degree $2$ before we attached the path of length $4$ and the in-degree of no other vertex changes. Therefore we cannot create a $1$-$2$-$3$ path in this step. 
\end{proof}

\begin{lemma}
At each iteration of the main {\bf while}-loop there is no active $1$-$2$ edge of a $1$-$2$-$3$ path.
\end{lemma}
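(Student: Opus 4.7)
The plan is to prove the invariant by induction on the number of iterations of the main \textbf{while}-loop. In the base case, immediately after (A0), the oriented subgraph is a single cycle with orientation taken from Lemma~\ref{path} (with $v_1$ and $v_n$ identified); every in-degree is at most $2$, so no $1$-$2$-$3$ path exists and the invariant holds vacuously.

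For the inductive step, assume the invariant holds at the start of an iteration which picks an active edge $e=\{a,b\}$ with $d^{-}(a)\le d^{-}(b)$ and attaches a path $P$. I would first catalogue every mechanism by which a new in-degree-$3$ vertex can be created during this iteration (including all recursive calls of Procedure-1 and, if triggered, Procedure-2), and then verify case by case that none of them produces an active $1$-$2$ edge belonging to a $1$-$2$-$3$ path. Inspecting Lemmas~\ref{path}, \ref{connectfan}, \ref{in1}, \ref{connectfan23}, and \ref{connectfan23a} shows that every interior vertex of $P$ is assigned in-degree in $\{0,1,2\}$, so the only possible sources of a new in-degree-$3$ vertex are: step (A2); the fan orientations of Figures~\ref{normal_kfan} and \ref{added_kfan} used inside Procedure-1; the two fan-aware sub-orientations of Procedure-2; and an already existing vertex whose in-degree is raised by an incoming edge from $P$.

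Next, I would verify the invariant locally at each such source. In (A1) neither endpoint of $e$ is a trouble maker and by Lemma~\ref{path} the orientation can be chosen so that no new vertex of in-degree $3$ or new trouble maker appears. In (A2), the picture shows that the single new in-degree-$3$ vertex has both neighbours of in-degree $0$ or $1$, and its only pre-existing incident edge is $\{a,b\}$, which is inactivated by the attachment of $P$; consequently no \emph{active} $1$-$2$ edge lies on a $1$-$2$-$3$ path. For (A3) and (A4), Lemma~\ref{first} guarantees that the only way to create a new active $1$-$2$ edge of a $1$-$2$-$3$ path is to attach a path or a $k$-fan at an edge incident to a trouble maker, and these are precisely the edges on which Procedure-1 or Procedure-2 is invoked. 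One then checks, branch by branch, that each figure used in Procedure-1 (Figures~\ref{normal_kfan} and \ref{added_kfan}) and each diagram used in Procedure-2 leaves the newly created in-degree-$3$ vertex with all of its trouble-maker neighbours sitting on inactive edges; any remaining active $1$-$2$ edge spawned along the way is itself dealt with by the next recursive call on an edge closer to the frontier of the construction, so the recursion strictly decreases and terminates.

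The main obstacle will be the subcase of (A4) with $||P||=3$ that triggers Procedure-2, where the future attachment of a $k$-fan on the far edge $e''$ must be anticipated: the two dedicated orientations pictured in Procedure-2 are precisely designed to keep the new in-degree-$3$ vertex away from any active $1$-$2$ edge, and verifying this requires tracking the in-degrees of $a$, $b$, $c$, and the vertices of the upcoming $0$-fan or the first path of length $3$ of the $k$-fan in parallel. Once that case is settled, the induction closes and the invariant is maintained throughout the execution of the algorithm.
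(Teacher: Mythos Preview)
Your plan is correct and mirrors the paper's own argument: both proceed by case analysis along the branches (A1)--(A4), invoke Lemma~\ref{first} to dispose of (A1) and (A2), and then trace Procedure-1 (and the $\|P\|=3$ subcase of (A4) via Procedure-2) to show that any transient active $1$-$2$ edge on a $1$-$2$-$3$ path is immediately reprocessed until the offending edge becomes inactive. One small imprecision: in (A2) the new in-degree-$3$ vertex is the endpoint $b$ (which had in-degree $2$ before), not an interior vertex of $P$, so it may well have several pre-existing incident edges; the correct reason no $1$-$2$-$3$ path arises is simply that, by properness, none of $b$'s old neighbours had in-degree $2$, exactly as the paper argues.
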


\begin{proof}
We have seen in Lemma~\ref{first} that we cannot get an active $1$-$2$ edge of a $1$-$2$-$3$ path in (A1) and (A2). So now assume the lemma is true as we enter (A3). If a $k$-fan is
attached, then the neighbour $c$ of $b$ in $P$ will have in-degree $3$ and the neighbour $d$ of $c$ in $P$ will have in-degree at most $1$. So $c$ does not create a new trouble 
maker and no new $1$-$2$-$3$ path. We then may repeatedly call Procedure-1. Note that we may create an active $1$-$2$ edge in a $1$-$2$-$3$ path but we immediately will call Procedure-1 on these edges. 
We only finish when no more paths or $k$-fans are attached and so the $1$-$2$ edge is inactive.

We can argue similarly for (A4) but here we have some added complications because $d^{-}(b)=3$ and we do not want to introduce a new $1$-$2$-$3$ path. We do so by insuring that 
either the vertex adjacent to $b$ in $P$ has in-degree $1$ (if $P$ is even, $||P||\not=4$) or has in-degree $2$ but its neighbour has in-degree equal to $0$ ($P$ odd, $||P||=4$). 
Also we may add the path of length $3$ in which case we get two active $1$-$2$ edges in a $1$-$2$-$3$ path if we use Lemma~\ref{path}. Note, that adding a path of length $3$ to a $2$-$3$ edge (i.e. an edge with vertex in-degrees $2$ and $3$) is the only way to create more than one active $1$-$2$ edge in a $1$-$2$-$3$ path.

\begin{figure}
\begin{center}
%
\includegraphics[scale=0.5]{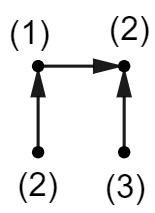}
\end{center}
\caption{\label{2313} The required orientation of the path. The in-degrees are in brackets.}
\end{figure}

We deal with this case by first considering the $1$-$2$-$3$ path that contains only one edge of the new path. If a $k$-fan is added then we choose a different orientation of the path (see Figure \ref{2313}), which yields the in-degree sequence $2$, $3$, $1$, $3$, where the degree $2$ vertex is (and remains) the trouble maker. In this case, we do not have to deal with a second active $1$-$2$ edge. We may create a new active $1$-$2$ edge which is part of the $k$-fan but we repeatedly use Procedure-1 until no more paths or $k$-fans get added to such an edge.

If a $k$-fan is added to the edge not containing the trouble maker then we also change the orientation of the path and either add a path of length $4$ (in case of a $0$-fan) or one path of length $3$ of the $k$-fan. In either case, we do not create a new $1$-$2$ edge and do not have to use Procedure-1.

If no $k$ fan is added to either of these edges then we just use Lemma~\ref{path} and then Procedure-1 on both edges until no active $1$-$2$ edge is present.
\end{proof}

It is now straightforward to check that using our algorithm we create a proper 3-orientation of $G$ at the end of each step (A1), (A2), (A3) and (A4). Thus Theorem \ref{main2} follows.

\subsection{A $2$-connected triangle-free outerplanar graph $G$ with $\vec{\chi}(G)=3$ }

Consider the $2$-connected outerplanar graph consisting of one cycle $C_5$ of length $5$ and a path of length $4$ attached to each of its edges, see Figure~\ref{example}.

\begin{figure}[h]
\begin{center}
%
%
%
%
%
%
%
%
%
%
%
%
%
%
%
\includegraphics[scale=0.45]{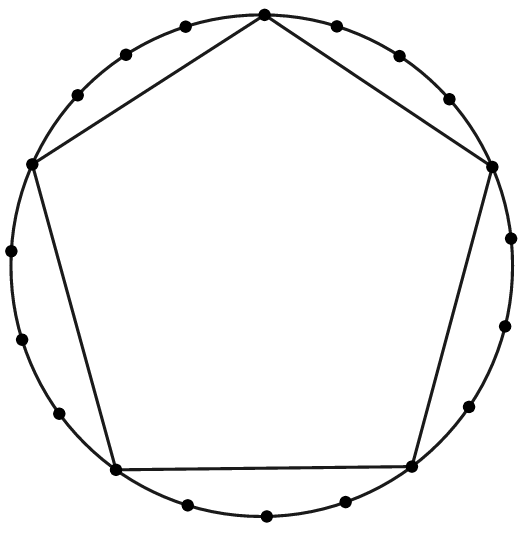}
\end{center}
\caption{\label{example} An example of a graph $G$ with $\vec{\chi}(G)=3$}
\end{figure}

Assume for a contradiction that we can orient the graph in such a way that the maximum in-degree is less than $3$. The only way to orient the $5$-cycle without creating a $1$-$2$-edge is to orient the edges cyclicly so that every vertex in $C_5$ has in-degree equal to $1$.
 Note that a $1$-$2$-edge $e$ in $C_5$ would mean that we cannot orient the path of length $4$ attached to $e$ in a proper way without increasing the in-degree of one of the endpoints of $e$; and since the orientation should remain proper that means that the in-degree of one vertex has to become $3$. But if all the in-degrees on $C_5$ are equal to one, then we do not have a proper orientation, so when we attach the first path of length $4$ to an edge $e$, one of the vertices $a$ of $e$ will get an in-degree of $2$. Observe that the other edge incident to $a$ in $C_5$ is still active and this forces the orientation to give $a$ in-degree $3$ as above.

\section{Proofs of Theorems \ref{bridgefree} and \ref{treefree}}\label{sec:bridgefree}
A {\em block} in a graph $G$ is a maximal 2-connected subgraph of $G.$
A \emph{block tree} of a connected graph $G$ denoted by $B(G)$ is a bipartite graph with bipartition $(B, C)$, where $B$ is the set of blocks of $G$ and $C$ is the set of cut-vertices of $G$, a block $L$ and a cut-vertex $v$ being adjacent in $B(G)$ if and only if $L$ contains $v$. It is not hard to see that $B(G)$ is a tree \cite{BM}.

To prove Theorem \ref{bridgefree}, it suffices to consider a connected, bridgeless, outerplanar graph $G$. Choose a block $L_1$ of $G$ as a root of $B(G)$ and apply breadth-first search on $B(G)$ from $L_1$ to visit all blocks of $G$ one by one. Now if we consider the blocks of $G$ in the order $L_1,\dots ,L_p$ they were visited,
we can orient the edges of one block and then extend it to another block sharing a cut-vertex as follows, without ever encountering a block we have already oriented. 

Let $G'$ be an outerplanar embedding of $G$ into the plane. Consider $L_1$ and orient its edges as in Section~\ref{2connected}. Assume that we have oriented the edges of blocks $L_1,\dots ,L_{j-1}$ and we wish to orient edges of $L_j.$ Assume that  a cut-vertex $v$ is the parent of $L_j$ on the rooted tree $B(G)$ and let $H$ be the orientation of $L_1\cup \dots \cup L_{j-1}.$
If $d^-_H(v)\neq 2$, then we choose a face of $G'$ in $L_j$ containing $v$ and use Lemma~\ref{connectfan}, where we identify $v_1$ and $v_n$ with $v,$ to get the orientation of the border cycle of $L_j$ in $G'.$
If $d_H^-(v)=2$, we just orient the edges towards $v$ as in Lemma~\ref{in1} so that $v$ will have in-degree $4$. Note that we only change an in-degree $2$ vertex to an in-degree $4$ vertex, so no in-degree $4$ vertices will ever be adjacent.  
Now we can just proceed orienting $L_j$ with our algorithm for the $2$-connected case. 

Now consider a connected, triangle-free, tree-free, outerplanar graph $G$. We proceed similarly to the bridgeless case, but now sometimes two blocks are connected by a bridge instead of a cut-vertex. So assume we have oriented the edges in $L_1,\ldots, L_i$ and we want to orient the edges of $L_{i+1}$ connected to $L_1,\ldots, L_i$ via a bridge $e=\{a,b\}$ where $b\in L_{i+1}$. We orient $e$ towards $b$. If the in-degree of $a$ in the orientation $F$ of $L_1\cup \dots \cup L_i$
does not equal $1$, then we consider a face of $G'$ containing $b$ in $L_{i+1}$ and orient its border cycle using Lemma~\ref{connectfan} identifying $v_1$ and $v_n$ with $b$. If the in-degree of $a$ in $F$ does equal $1$, then we use Lemma~\ref{in1} to obtain a vertex of in-degree $3$. Note that the neighbours of $b$ in $L_{i+1}$ have in-degree $0$ in $F$, so we do not create a $1$-$2$-$3$ path.
We can now proceed to orient the edges in $L_{i+1}$ as in Section~\ref{2connected}.

\section{Open Problems}\label{sec:prob}

We do not know whether the bounds of Theorems \ref{bridgefree} and \ref{treefree} are tight or not. However, this is not important in attacking the open question of whether there is a constant $c$ such that 
$\vec{\chi}(G)\leq c$ for every outerplanar graph $G$. Thus, it would be more interesting to obtain a non-trivial extension of Theorem  \ref{treefree} by either removing the condition that $G$ is triangle-free or that $G$ is tree-free. Another way of attacking the open question is to extend the cactus bound of \cite{AJ} from cacti to a wider class of graphs. 

\vspace{3mm}

\noindent {\bf Acknowledgements:} 
We would like to thank 
the referees for their careful reading and helpful comments.
Shi and Taoqiu were partially supported by the National Natural Science Foundation of China (No. 11922112).

\end{document}